\documentclass[12pt]{amsart}
\usepackage{amssymb,amsmath,amsfonts,amsthm,latexsym}
\usepackage{verbatim}

\numberwithin{equation}{section}
\numberwithin{table}{section}

\theoremstyle{plain}
\newtheorem{theorem}{Theorem}[section]
\newtheorem{lemma}[theorem]{Lemma}

\theoremstyle{remark}
\newtheorem*{remark}{Remark}

\newcommand{\zl}[1]{\mathbb{Z}^{\mathcal{L}_{{#1}}}}
\newcommand{\fl}[1]{\mathbb{F}_{p}^{\mathcal{L}_{{#1}}}}
\DeclareMathOperator{\im}{Im}

\title[Elementary Divisors]{The Elementary Divisors of the Incidence Matrices of Skew Lines in $PG(3,p)$}
\author{Joshua Ducey} 
\author{Peter Sin} 
\address{Department of Mathematics, University of Florida, Gainesville, FL 32611--8105, USA.}
\email{jducey21@ufl.edu}
\email{sin@ufl.edu}

\keywords{Incidence matrices, skew lines, elementary divisors} 

\begin{document}
\begin{abstract}
The elementary divisors of the incidence matrices of lines in $PG(3,p)$ are computed, where two lines are incident if and only if they are skew.  
\end{abstract}
\maketitle
\section{Introduction}
Let $V$ be a 4-dimensional vector space over the finite field $\mathbb{F}_{p}$ of $p$ elements, where $p$ is a prime.  We declare two 2-dimensional subspaces $R$ and $S$ to be incident if and only if $R \cap S = \{0\}$.  Ordering the 2-dimensional subspaces in some arbitrary but fixed manner, we can form the incidence matrix $A$ of this relation.  In this paper we compute the elementary divisors of $A$ viewed as an integer matrix.  In order to introduce some useful notation, we will view this setup as a special case of a more general situation.  For brevity, an $r$-dimensional subspace will be called an $r$-subspace in what follows.

More generally, let $V$ be an (n+1)-dimensional vector space over the finite field $\mathbb{F}_{q}$, where $q = p^{t}$ is a prime power.  Let $\mathcal{L}_{r}$ denote the set of $r$-subspaces of $V$.  Thus $\mathcal{L}_{1}$ denotes the points, $\mathcal{L}_{2}$ denotes the lines, etc. in $\mathbb{P}(V)$.  An $r$-subspace $R \in \mathcal{L}_{r}$ and an $s$-subspcace $S \in \mathcal{L}_{s}$ are incident if and only if $R \cap S = \{0\}$.  The incidence matrix with rows indexed by the $r$-subspaces and columns indexed by the $s$-subspaces is denoted $A_{r,s}$.  We can and do view $A_{r,s}$ as the matrix of a homomorphism of free $\mathbb{Z}$-modules 
\[
\eta_{r,s}: \zl{r} \to \zl{s}
\]
that sends an $r$-subspace to the (formal) sum of all $s$-subspaces incident with it.  Then computing the elementary divisors of $A_{r,s}$ is the same thing as finding a cyclic decomposition of the cokernel of $\eta_{r,s}$.

These matrices $A_{r,s}$ have been studied by several authors.  Their $p$-ranks can be deduced from the work~\cite{sin:2004}, but their elementary divisors have been computed only in the case that either $r$ or $s$ is equal to one~\cite{sin:2000, chandler:sin:xiang:2006}.

We return now and for the remainder of the paper to the case where $V$ is 4-dimensional over the prime field $\mathbb{F}_{p}$, and $A = A_{2,2}$.
\begin{theorem}  \label{thm:main}
The elementary divisors of the incidence matrix $A$ are all $p$-powers, and are as given in the table below.
\begin{center}
\begin{tabular}{|c|c|} 
\hline
Elem. Div. & Multiplicity       \\ \hline
\ $1$ & $p(2p^{2}+1)/3$           \\
\ $p$ & $p(3p^3-2p^2+3p-1)/3$   \\
\ $p^2$ & $p(p+1)(p+2)/3$       \\
\ $p^3$ & $p(2p^{2}+1)/3$         \\
\ $p^4$ & $1$                   \\
\hline
\end{tabular}
\end{center}
\end{theorem}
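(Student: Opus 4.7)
The proof has two phases: a global phase showing all elementary divisors are $p$-powers, and a local phase extracting their multiplicities from the $\mathbb{F}_p[GL_4(\mathbb{F}_p)]$-module structure of $\fl{2}$.

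\emph{Global phase.} Any two distinct lines of $PG(3,p)$ meet in $0$ or $1$ points, so the skew-lines relation defines a strongly regular graph on $\mathcal{L}_2$. Direct counting shows each line is skew to exactly $p^{4}$ others, and a similar count of the parameters $\lambda,\mu$ gives the remaining eigenvalues of $A$ as signed $p$-powers. Hence $\lvert\det A\rvert$ is a power of $p$, so every elementary divisor is a $p$-power, and the identity $\sum i\cdot\mu(p^{i})=v_p(\det A)$ already gives a consistency check against the claimed table.

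\emph{Local phase.} Set $L=\mathbb{Z}_p^{\mathcal{L}_2}$ and extend $\eta=\eta_{2,2}$ to $L$. For $0\le i\le 4$, the submodules $N_i:=\eta^{-1}(p^{i}L)\supseteq pL$ descend through $L$, and their mod-$p$ reductions $\overline{N}_i\subseteq\overline{L}:=\fl{2}$ are $\mathbb{F}_p[GL_4(\mathbb{F}_p)]$-submodules from which the elementary-divisor multiplicities are read as $\mu(p^{i})=\dim\overline{N}_i-\dim\overline{N}_{i+1}$. The submodule lattice of $\overline{L}$ is explicitly known, with composition factors of known dimensions, so computing the $\overline{N}_i$ reduces to finding the $p$-depth of each composition factor. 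Three inputs control this: Chandler--Sin--Xiang's $p$-rank formula fixes $\mu(1)=p(2p^{2}+1)/3$; the relation $A\mathbf{1}=p^{4}\mathbf{1}$ places $\mathbb{F}_p\mathbf{1}$ at the deepest level of the filtration and yields $\mu(p^{4})=1$; and the point-counting identity
\[
A+\eta_{2,1}^{T}\eta_{2,1}=pI+(p^{3}+p^{2}-p)J,
\]
obtained by enumerating common points of two 2-subspaces, relates the intermediate $\overline{N}_i$ to the already understood filtration on the point-line incidence map $\eta_{2,1}$.

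\emph{Main obstacle.} The crux is distinguishing $\overline{N}_2$ from $\overline{N}_3$, i.e.\ assigning the correct $p$-depth to each middle composition factor of $\overline{L}$. I expect this to proceed by lifting each composition factor to a $\mathbb{Z}_p$-sublattice of $L$ via explicit module generators, computing the exact $p$-valuation of $\eta$ on it, and checking that the resulting $\mathbb{F}_p$-dimensions sum to the claimed $p(p+1)(p+2)/3$ and $p(3p^{3}-2p^{2}+3p-1)/3$. Self-adjointness of $A$ halves the amount of work, since the filtration on the kernel side agrees with the one on the image side. Once this combinatorial matching is established, all remaining multiplicities follow from the filtration.
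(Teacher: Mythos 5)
Your global phase and your filtration setup coincide with the paper's: the quadratic relation satisfied by $A$ shows its eigenvalues are signed $p$-powers, hence $|\det A|$ is a $p$-power and all elementary divisors are $p$-powers; and the multiplicity of $p^{i}$ is $\dim_{\mathbb{F}_p}(\overline{M_{i}}/\overline{M_{i+1}})$ for the filtration $M_{i}=\eta_{2,2}^{-1}(p^{i}\zl{2})$. The gap is the entire local phase. Your premise that ``the submodule lattice of $\fl{2}$ is explicitly known'' is false and is not what the argument can rest on: only the structures of $Y_{1}$ and $Y_{3}$ are known from the literature, while $Y_{2}$ is not multiplicity-free (its head and socle are both isomorphic to $S_{2}$, a fact that itself requires proof via $\dim\mathrm{End}_{\mathbb{F}_pG}(\fl{2})=3$ and Frobenius reciprocity), so the $\overline{M_{i}}$ are not determined by ``assigning a $p$-depth to each composition factor.'' Moreover, the one concrete identity you propose, $A+\eta_{2,1}^{T}\eta_{2,1}=pI+(p^{3}+p^{2}-p)J$, though correct, only yields information modulo $p$ (it shows $\im\overline{\eta_{2,2}}\subseteq\im\overline{\phi}$); it produces no elements of $M_{2}$ or $M_{3}$, so it cannot separate the $p^{2}$- and $p^{3}$-levels, which you yourself identify as the crux.

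What is actually needed, and what your plan lacks, is a source of explicit elements deep in the filtration together with a mechanism for proving the reverse inclusions. The paper gets the former from (i) the composite $\eta_{2,2}\circ\phi$ having all coefficients divisible by $p^{2}$, and dually for the plane-to-line map $\psi$ --- you never mention the hyperplane side at all, yet without $\im\overline{\psi}$ the constituent $S_{3}$ in the count $e_{2}=\dim S_{1}+\dim S_{3}$ is unaccounted for --- and (ii) the rewritten quadratic relation $(A+(p^{2}-p)I)A=p^{3}I+(p^{4}-p^{3})J$, which places $\im\overline{\eta_{2,2}}$ inside $\overline{M_{3}}$. These give only one-sided inclusions, hence only inequalities $e_{i}+\cdots+e_{4}\le f_{i}+f_{i+1}+\cdots$; the step that converts them into equalities is summing the inequalities and comparing with $v_{p}(\det A)$. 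You relegate the determinant to ``a consistency check,'' but it is the load-bearing step: without it your plan has no way to rule out composition factors sitting deeper in the filtration than the exhibited submodules, and your fallback of ``computing the exact $p$-valuation of $\eta$ on explicit lifts'' is not carried out and would be substantially harder than the counting argument it replaces.
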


\begin{remark}
The fact that the elementary divisors are $p$-powers follows from more general work of Andries Brouwer~\cite{brouwer}.  Indeed, the problem that is the subject of this paper was suggested by Brouwer to the second author at the recent workshop on invariants of incidence matrices held at the Banff International Research Station.
\end{remark}
The rest of the paper will be organized as follows.  In the next section, the determinant of $A$ will be computed.  It will follow at once from this computation that the cokernel of $\eta_{2,2}$ is a finite $p$-group.  We will then introduce a certain $p$-filtration of a $GL(4,p)$-module that allows for a convenient reformulation of our problem.  The proof of the theorem relies on the submodule structure of this $GL(4,p)$-module.
\section{The Determinant of A}
\label{sec:det}	
Let $x \in \mathcal{L}_{2}$.  Then we have 
\[
\eta_{2,2}(\eta_{2,2}(x)) = \sum_{y \in \mathcal{L}_{2}}a_{x,y}y
\]
where
\begin{align*}
a_{x,y} &= \lvert\{z \in \mathcal{L}_{2} \, \vert \, z \cap x = \{0\} \hbox{ and } z \cap y = \{0\}\}\rvert \\
&= \begin{cases}
   p^{4}, & \hbox{ if } y = x \\
   p^{4}-p^{3}-p^{2}+p, & \hbox{ if } y \neq x \hbox{  and } y \cap x = \{0\} \\
   p^{4} - p^{3}, & \hbox{ if } y \neq x \hbox{ and } y \cap x \neq \{0\}
   \end{cases}
\end{align*}
as follows by an elementary counting argument.  Keeping in mind that $x$ is incident with $y$ if and only if $x \cap y = \{0\}$, from the above we immediately get the relation
\[
A^{2} = p^{4}I + (p^{4} - p^{3} - p^{2} + p)A + (p^{4} - p^{3})(J - A - I),
\]
which reduces to
\begin{equation}
\label{eq:matrix}
A^{2} = p^{3}I - (p^{2} - p)A + (p^{4} - p^{3})J,
\end{equation}
where $I$ and $J$ denote the $|\mathcal{L}_{2}| \times |\mathcal{L}_{2}|$ identity matrix and all-one matrix, respectively.

Since all matrices involved in~\eqref{eq:matrix} are symmetric and commute with each other, we can simultaneously diagonalize to obtain a system of $|\mathcal{L}_{2}|$ equations:
\begin{align*}
\lambda_{1}^{2} &= p^{3} - (p^{2} - p)\lambda_{1} + (p^{4} - p^{3})(p^{2} + 1)(p^{2} + p + 1), \\
\lambda_{i}^{2} &= p^{3} - (p^{2} - p)\lambda_{i} \, , \qquad 2 \le i \le |\mathcal{L}_{2}|
\end{align*}
where $\lambda_{1}$, $\lambda_{2}$, . . . are the eigenvalues of $A$ counted with multiplicity.  Solving the quadratic equations, we see that $\lambda_{1}$ is either $p^{4}$ or $p - p^{2} - p^{4}$, and that for each $i \ge 2$ we have that $\lambda_{i}$ is either $p$ or $-p^{2}$.  Since the all-one vector is an eigenvector for $A$ with eigenvalue $p^{4}$, we must have that $\lambda_{1} = p^{4}$.  Using the fact that the trace of $A$ is zero, one can easily solve for the multiplicities of the remaining eigenvalues of $A$.  The eigenvalue $p$ has multiplicity $p^{4} + p^{2}$ and the eigenvalue $-p^{2}$ has multiplicity $p^{3} + p^{2} + p$.  This yields:
\begin{equation}
\label{eq:det}
\det(A) = p^{4} \cdot p^{(p^{4} + p^{2})} \cdot (-p^{2})^{(p^{3} + p^{2} + p)} = (-1)^{p} p^{(p^{4} + 2p^{3} + 3p^{2} + 2p + 4)}.
\end{equation}
\section{Some Modules for $GL(4,p)$}
\label{sec:action}
Set $G = GL(4,p)$.  If we fix a basis of $V$ then $G$ acts transitively on the sets $\mathcal{L}_{r}$ $(r=1,2,3)$ and the incidence map $\eta_{2,2}: \zl{2} \to \zl{2}$ is clearly a homomorphism of $\mathbb{Z}G$-permutation modules.  Reduction mod $p$ induces an $\mathbb{F}_{p}G$-permutation module homomorphism, denoted $\overline{\eta_{2,2}}: \fl{2} \to \fl{2}.$  Define a sequence of $\mathbb{Z}G$-submodules $\{M_{i}\}_{i\ge0}$ of $\zl{2}$ as follows.  Put $M_{0} = \zl{2}$, and for $i\ge1$ put
\[
M_{i} = \{m \in \zl{2} \, \vert \, \eta_{2,2}(m) \in p^{i}\zl{2}\}.
\]
Thus we have a descending filtration $\zl{2} = M_{0} \supseteq M_{1} \supseteq M_{2} \supseteq \cdot \cdot \cdot$ of $\zl{2}$.  For a submodule $N$ of $\zl{2}$, denote by $\overline{N}$ its image in $\fl{2}$; in other words $\overline{N} = (N + p\zl{2})/p\zl{2}$.  We have an induced $p$-filtration
\[
\fl{2} = \overline{M_{0}} \supseteq \overline{M_{1}} \supseteq \overline{M_{2}} \supseteq \cdot \cdot \cdot
\]
of $\mathbb{F}_{p}G$-submodules.  With a little thought, one sees that for each $i \ge 0$ the multiplicity of $p^{i}$ as an elementary divisor of $A$ is precisely $\dim_{\mathbb{F}_{p}}(\overline{M_{i}}/\overline{M_{i+1}})$.  This reformulation of our problem allows us to focus our attention on the $\mathbb{F}_{p}G$-submodule structure of $\fl{2}$, which we now do.

For $r = 1,2,3$ define
\[
Y_{r} = \left\{\sum_{x \in \mathcal{L}_{r}}a_{x}x \in \fl{r} \, \big\vert \, \sum_{x \in \mathcal{L}_{r}}a_{x} = 0\right\}.
\]
We use $\mathbf{1}$ to denote the element $\sum_{x \in \mathcal{L}_{r}}x \in \zl{r}$, and the same symbol also for its image in $\fl{r}$ ($r$ will always be clear from the context).  Since $|\mathcal{L}_{r}| \equiv 1 \pmod p$ we have the decomposition
\begin{equation}
\label{eq:decomp}
\fl{r} = \mathbb{F}_{p}\mathbf{1} \oplus Y_{r}.
\end{equation}
Thus the essential task is to understand $Y_{2}$.  Crucial for our analysis is the $\mathbb{F}_{p}G$-submodule structure of $Y_{1}$ and $Y_{3}$, which is well-known and given in the theorem below.

For $i=1,2,3$ let $S_{i}$ denote the degree $i(p-1)$ component of the graded algebra $S^{*}(V)/(V^{p})$, the quotient of the symmetric algebra on $V$ by the ideal generated by $p$-powers.  $S_{i}$ is a simple $\mathbb{F}_{p}G$-module with dimension equal to the coefficient of $x^{i(p-1)}$ in the expansion of $(1 + x + \cdot \cdot \cdot + x^{p-1})^{4}$.  Explicitly, we have
\begin{equation}
\label{eq:omega1}
\dim_{\mathbb{F}_{p}}S_{1} = \dim_{\mathbb{F}_{p}}S_{3} = \frac{p(p+1)(p+2)}{6}
\end{equation}
and
\begin{equation}
\label{eq:omega2}
\dim_{\mathbb{F}_{p}}S_{2} = \frac{p(2p^{2}+1)}{3}.
\end{equation}
\begin{theorem}[Cf.~\cite{sin:2000}, Theorem 2 and the comment following it.]
\label{thm:aux}
\hfil
\begin{enumerate}
\item \label{item:auxY1} $Y_{1}$ is uniserial with composition series
\[
Y_{1} = W_{1} \supseteq W_{2} \supseteq W_{3} \supseteq W_{4} = \{0\}
\]
and simple quotients
\[
W_{i}/W_{i+1} \cong S_{i}, \qquad \hbox{for } i=1,2,3.
\]
\item \label{item:auxY3} $Y_{3}$ is uniserial with composition series
\[
Y_{3} = U_{3} \supseteq U_{2} \supseteq U_{1} \supseteq U_{0} = \{0\}
\]
and simple quotients
\[
U_{i}/U_{i-1} \cong S_{i}, \qquad \hbox{for } i=1,2,3.
\]
\end{enumerate}
\end{theorem}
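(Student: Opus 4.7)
The plan is to realize $Y_1$ inside a well-understood polynomial $\mathbb{F}_p G$-module, read off the submodule structure from the grading, and then obtain $Y_3$ by duality. The central object is the truncated symmetric algebra $S^{*}(V)/(V^{p})$, whose degree $i(p-1)$ component is (by definition) $S_i$. Via the natural evaluation of polynomials on $V$, one identifies $\fl{1}$ (up to a one-dimensional correction arising from the characteristic function of $V\setminus\{0\}$) with the positive-degree part of the $\mathbb{F}_p^{\times}$-invariants in $\mathbb{F}_p[x_1,\ldots,x_4]/(x_i^p-x_i)$, which decomposes into the homogeneous components of degrees $p-1,\,2(p-1),\,3(p-1),\,4(p-1)$. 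After accounting for the complementary summand $\mathbb{F}_p\mathbf{1}$ of~\eqref{eq:decomp}, the total-degree filtration induces the descending filtration $W_1\supseteq W_2\supseteq W_3$ of $Y_1$, with $W_i/W_{i+1}\cong S_i$.

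Simplicity of each $S_i$ as an $\mathbb{F}_p G$-module is the first nontrivial ingredient. For $i=1$, this is the classical irreducibility of $S^{p-1}(V)$, valid because $p-1<p$. For $i=2,3$, the module $S_i$ is the restricted irreducible of $GL(4,p)$ with highest weight $(p-1)\omega_i$, where $\omega_i$ is the $i$-th fundamental weight; simplicity follows from highest-weight theory for the algebraic group $GL_4$ together with Steinberg's tensor product theorem, or alternatively by a direct check using the action of the unipotent root subgroups.

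The crux is uniseriality. Since the highest weights $(p-1)\omega_1,(p-1)\omega_2,(p-1)\omega_3$ are pairwise distinct, the composition factors $S_1,S_2,S_3$ are pairwise non-isomorphic, so uniseriality reduces to showing that both extensions built into the filtration are non-split. I would do this by identifying the socle of $Y_1$ explicitly: one shows that any nonzero $\mathbb{F}_p G$-submodule of $Y_1$ must contain the class of the top-degree monomial $x_1^{p-1}x_2^{p-1}x_3^{p-1}x_4^{p-1}$ (modulo the correction from $\mathbf{1}$), which is fixed by the upper-triangular Borel subgroup $B$ and is a highest-weight vector for $S_3$. This forces $\operatorname{soc}(Y_1)=W_3\cong S_3$, and a dual argument identifies the unique maximal submodule of $Y_1$ with quotient $S_1$. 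Given the distinctness of the composition factors, this is enough to conclude that the filtration is the unique composition series.

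The statement for $Y_3$ then follows from that for $Y_1$ via the duality between points and hyperplanes in $PG(3,p)$: the canonical pairing induces a $G$-module isomorphism $Y_3\cong Y_1^{*}$ after twisting by the outer automorphism $g\mapsto(g^{-1})^{T}$ of $GL(4,p)$, and dualizing the composition series of $Y_1$ reverses the order of the simple factors, yielding the filtration of $Y_3$ as stated. The main obstacle in the whole argument is the non-splitness of the extensions within $Y_1$, i.e., uniseriality itself; every other step is essentially formal once the polynomial embedding and the identification of the $S_i$ as restricted irreducibles are in place.
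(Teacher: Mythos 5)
The paper does not actually prove Theorem~\ref{thm:aux}; it quotes it from~\cite{sin:2000}. Your overall route --- realize $\mathbb{F}_p^{\mathcal{L}_1}$ inside the ring of $\mathbb{F}_p^{\times}$-invariant functions $\mathbb{F}_p[x_1,\dots,x_4]/(x_i^p-x_i)$, read off a filtration from total degree, quote simplicity of the graded components, and transfer to $Y_3$ by point--hyperplane duality --- is essentially the route of the cited source, so the strategy is sound. But the step you yourself flag as the crux (uniseriality, via identifying the socle) is exactly where the proposal goes wrong, in two concrete ways.

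First, the element you name is not a socle vector. The monomial $x_1^{p-1}x_2^{p-1}x_3^{p-1}x_4^{p-1}$ has degree $4(p-1)$; in $S^{*}(V)/(V^{p})$ it spans the one-dimensional \emph{trivial} component $S_4$ (as $\det(g)^{p-1}=1$), not a highest-weight line of $S_3$, which sits in degree $3(p-1)$. Moreover, as a function on $V\setminus\{0\}$ it is the indicator of the vectors off the coordinate hyperplanes; reducing modulo $\delta_0=\prod_i(1-x_i^{p-1})$ and projecting to $Y_1$, one finds its degree-$3(p-1)$ component is nonzero, so it lies outside the unique maximal submodule of $Y_1$: it \emph{generates} $Y_1$ rather than lying in $\mathrm{soc}(Y_1)$. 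Second, the direction and indexing of your filtration are unjustified as written: in $\mathbb{F}_p[x_1,\dots,x_4]/(x_i^p-x_i)$ the reduction $x_i^p\mapsto x_i$ only lowers degree, so the $G$-submodules are spanned by \emph{low}-degree monomials (the filtration is ascending), and since the $x_i$ are coordinate functions, i.e.\ elements of $V^{*}$, the degree $i(p-1)$ graded piece is $S^{i(p-1)}(V^{*})/(\mathrm{trunc})\cong S_{4-i}$, not $S_i$. These two reversals happen to cancel --- the bottom piece is the degree-$(p-1)$ component, and $S^{p-1}(V^{*})\cong S_1^{*}\cong S_3$ via the multiplication pairing into $S_4$ --- so $W_3\cong S_3$ does come out right, but your text contains neither reversal, and the one computation you propose targets the wrong vector. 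The real content, namely that \emph{every} nonzero submodule of $Y_1$ contains the degree-$(p-1)$ component (proved in the literature by applying group-algebra elements that strictly lower the degree of an arbitrary nonzero element), is asserted rather than argued. The duality step deducing the $Y_3$ statement from the $Y_1$ statement is fine.
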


The following two lemmas provide all of the information about the submodule structure of $Y_{2}$ that we will need in order to prove Theorem~\ref{thm:main} in the next section. Before we can state the lemmas we must first define two important maps. Let $\phi: \zl{1} \to \zl{2}$ be the $\mathbb{Z}G$-module homomorphism defined by
\[
\phi(x) = \sum_{\substack{y \in \mathcal{L}_{2} \\ x \cap y = \{0\}}} y, \qquad \hbox{for all } x \in \mathcal{L}_{1}.
\]
(So $\phi$ is just $\eta_{1,2}$.) The other map we call $\psi: \zl{3} \to \zl{2}$, and it is the $\mathbb{Z}G$-module homomorphism given by
\[
\psi(x) = \sum_{\substack{y \in \mathcal{L}_{2} \\ x \cap y \neq y}} y, \qquad \hbox{for all } x \in \mathcal{L}_{3}.
\]
Denote their reductions mod $p$ by $\overline{\phi}$ and $\overline{\psi}$, respectively. Notice how these maps respect the decomposition~\eqref{eq:decomp}.  In particular,
\[
\overline{\eta_{2,2}}(\mathbf{1}) = \overline{\phi}(\mathbf{1}) = \overline{\psi}(\mathbf{1}) = 0.
\]
Hence
\begin{align*}
\im\overline{\eta_{2,2}} &= \overline{\eta_{2,2}}(Y_{2}), \\
\im\overline{\phi} &= \overline{\phi}(Y_{1}), \\
\im\overline{\psi} &= \overline{\psi}(Y_{3}),
\end{align*}
and these are all submodules of $Y_{2}$.  This observation will be used frequently.
\begin{lemma}
\label{lem:incl}
\hfil
\begin{enumerate}
\item \label{item:incl1} $\ker\overline{\eta_{2,2}} \subseteq \overline{M_{1}}$.
\item \label{item:incl2} $\mathbb{F}_{p}\mathbf{1} \oplus (\im\overline{\phi} + \im\overline{\psi}) \subseteq \overline{M_{2}}$.
\item \label{item:incl3} $\mathbb{F}_{p}\mathbf{1} \oplus \im\overline{\eta_{2,2}} \subseteq \overline{M_{3}}$.
\item \label{item:incl4} $\mathbb{F}_{p}\mathbf{1} \subseteq \overline{M_{4}}$.
\end{enumerate}
\end{lemma}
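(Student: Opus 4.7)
The plan is to translate each inclusion into the concrete lifting problem dictated by the definition of $M_i$: to show $\bar z \in \overline{M_i}$, one must exhibit an integral lift $z \in \zl{2}$ with $\eta_{2,2}(z) \in p^i \zl{2}$. With this reformulation, parts~\ref{item:incl1} and~\ref{item:incl4} are essentially immediate. Any $m$ lifting an element of $\ker \overline{\eta_{2,2}}$ automatically satisfies $\eta_{2,2}(m) \in p\zl{2}$, giving~\ref{item:incl1}; and a direct subspace count shows that the number of $2$-subspaces skew to a given $2$-subspace is $p^4$, so $\eta_{2,2}(\mathbf{1}) = p^4 \mathbf{1}$ and $\mathbf{1} \in M_4$, giving~\ref{item:incl4}. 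The direct-sum signs in~\ref{item:incl2} and~\ref{item:incl3} are legitimate because $\im \overline{\phi}$, $\im \overline{\psi}$ and $\im \overline{\eta_{2,2}}$ all lie in $Y_2$, which is a complement of $\mathbb{F}_{p}\mathbf{1}$ by~\eqref{eq:decomp}.

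For~\ref{item:incl2}, the strategy is to compute the compositions $\eta_{2,2} \circ \phi$ and $\eta_{2,2} \circ \psi$ directly over $\mathbb{Z}$, by elementary counting. For $x \in \mathcal{L}_{1}$, the coefficient of $z \in \mathcal{L}_{2}$ in $\eta_{2,2}(\phi(x))$ is the number of $y \in \mathcal{L}_{2}$ with $x \not\subseteq y$ and $y \cap z = \{0\}$. Splitting by whether $x \subseteq z$ and using the bijection between $2$-subspaces of $V$ containing $x$ and $1$-subspaces of $V/x$, the two possibilities yield coefficients $p^4$ and $p^4 - p^2$, so that
\[
\eta_{2,2}(\phi(x)) = (p^4 - p^2)\mathbf{1} + p^2 \sum_{z \supseteq x} z \in p^2 \zl{2}.
\]
A parallel computation for $\psi$, where the role of ``$z$ contains $x$'' is replaced by ``$z$ is contained in $x$'', gives $\eta_{2,2}(\psi(x)) = (p^4 - p^2)\mathbf{1} + p^2 \sum_{z \subseteq x} z \in p^2 \zl{2}$. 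Hence $\im \phi, \im \psi \subseteq M_2$, and reducing mod $p$ gives the required inclusion.

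For~\ref{item:incl3}, the naive lift $y := \eta_{2,2}(m)$ of an element $\bar y = \overline{\eta_{2,2}}(\bar m)$ is typically not in $M_3$, so a corrective term is required. The essential tool is equation~\eqref{eq:matrix}, which lifts to the identity
\[
\eta_{2,2}^{2}(m) = p^{3} m - (p^{2} - p)\, \eta_{2,2}(m) + p^{3}(p-1) J(m)
\]
in $\zl{2}$, where $J(m) = \bigl(\sum_{x} m_{x}\bigr)\mathbf{1}$. The only summand not already in $p^{3}\zl{2}$ is $-(p^2 - p)\eta_{2,2}(m)$. I would therefore replace the lift by $y' := \eta_{2,2}(m) + p(p-1) m$, which reduces to the same $\bar y$ mod $p$; the identity is arranged so that the extra $p(p-1)\eta_{2,2}(m)$ exactly cancels $-(p^{2}-p)\eta_{2,2}(m)$, leaving $\eta_{2,2}(y') = p^{3} m + p^{3}(p-1) J(m) \in p^{3}\zl{2}$. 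Thus $y' \in M_3$ and $\bar y \in \overline{M_3}$.

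The only real obstacle is part~\ref{item:incl3}: one has to recognize that the $(p^2 - p)A$ term in~\eqref{eq:matrix} obstructs the naive lift, and then identify the specific multiple of $m$ that eliminates it. Once~\eqref{eq:matrix} is written out the cancellation is algebraically forced. The counts in part~\ref{item:incl2} are routine, with the apparent symmetry between the $\phi$ and $\psi$ computations reflecting the projective duality between points and hyperplanes of $V$.
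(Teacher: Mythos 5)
Your proposal is correct and follows essentially the same route as the paper: parts (1) and (4) are immediate, part (2) is the same incidence count (your coefficients $p^4$ and $p^4-p^2$ agree with the paper's), and your corrected lift $y' = \eta_{2,2}(m) + p(p-1)m$ in part (3) is exactly the paper's map $\gamma(x) = \eta_{2,2}(x) + (p^2-p)x$ obtained by rewriting equation~\eqref{eq:matrix} as $(A + (p^2-p)I)A = p^3I + (p^4-p^3)J$.
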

\begin{proof}
It follows immediately from the definition of $\overline{M_{1}}$ that, in fact, $\ker\overline{\eta_{2,2}} = \overline{M_{1}}$.  So~\eqref{item:incl1} holds.  Since $\eta_{2,2}(\mathbf{1}) = p^{4}\mathbf{1}$, \eqref{item:incl4} follows.

We can rewrite equation~\eqref{eq:matrix} as
\[
(A + (p^{2} - p)I)A = p^{3}I + (p^{4} - p^{3})J.
\]
Then if we define $\gamma : \zl{2} \to \zl{2}$ by $\gamma(x) = \eta_{2,2}(x) + (p^{2} - p)x$, for all $x \in \mathcal{L}_{2}$, it follows from the above matrix equation that $\eta_{2,2}(\im\gamma) \subseteq p^{3}\zl{2}$.  Thus $\im\gamma \subseteq M_{3}$ and so $\im\overline{\eta_{2,2}} = \overline{\im\eta_{2,2}} = \overline{\im\gamma} \subseteq \overline{M_{3}}$.  This establishes~\eqref{item:incl3}.

Let $x \in \mathcal{L}_{1}$.  Then we have
\[
\eta_{2,2}(\phi(x)) = \sum_{y \in \mathcal{L}_{2}}b_{x,y}y
\]
where 
\begin{align*}
b_{x,y} &= |\{z \in \mathcal{L}_{2} \, \vert \, z \cap x = \{0\} \hbox{ and } z \cap y = \{0\}\}| \\
&= \begin{cases}
   p^{4}, & \hbox{ if } x \cap y \neq \{0\} \\
   p^{4}-p^{2}, & \hbox{ if } x \cap y = \{0\}
   \end{cases}
\end{align*}
as is easily checked.  Thus $\eta_{2,2}(\phi(x)) \in p^{2}\zl{2}$ and so $\im\overline{\phi} \subseteq \overline{M_{2}}$.  By a dual argument, $\im\overline{\psi} \subseteq \overline{M_{2}}$ establishing~\eqref{item:incl2} and proving the lemma.
\end{proof}
\begin{lemma}
\label{lem:img}
\hfil
\begin{enumerate}
\item \label{item:img1} $Y_{2}$ has a simple head and socle, and $\mathrm{head}(Y_{2}) \cong \mathrm{soc}(Y_{2}) \cong S_{2}.$
\item \label{item:img2} $\im\overline{\eta_{2,2}} = \mathrm{soc}(Y_{2})$.
\item \label{item:img3} $(\im\overline{\phi} + \im\overline{\psi})/\mathrm{soc}(Y_{2}) \cong S_{1} \oplus S_{3}$.
\end{enumerate}
\end{lemma}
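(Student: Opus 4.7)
The plan is to assemble the structure of $Y_{2}$ from three inputs: the matrix identity~\eqref{eq:matrix} read modulo $p$, the uniserial structure of $Y_{1}$ and $Y_{3}$ from Theorem~\ref{thm:aux}, and the self-duality of $Y_{2}$ coming from the standard $G$-invariant bilinear form on $\fl{2}$ making $\mathcal{L}_{2}$ orthonormal.

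Reducing~\eqref{eq:matrix} modulo $p$ kills every term except $A^{2}$, giving $\overline{\eta_{2,2}}^{2}=0$, hence $\im\overline{\eta_{2,2}}\subseteq\ker\overline{\eta_{2,2}}$. Symmetry of $A$ makes $\overline{\eta_{2,2}}$ self-adjoint with respect to the above form, so $Y_{2}$ is self-dual and $\ker\overline{\eta_{2,2}}\cap Y_{2}=(\im\overline{\eta_{2,2}})^{\perp}$ in $Y_{2}$. The $p$-rank computation of~\cite{sin:2004} supplies $\dim\im\overline{\eta_{2,2}}=\dim S_{2}$. Next I would analyze $\overline{\phi}|_{Y_{1}}$: it is visibly nonzero and $Y_{1}$ is uniserial, so $\ker(\overline{\phi}|_{Y_{1}})\in\{0,W_{3},W_{2}\}$; the $p$-rank of $\eta_{1,2}$ from~\cite{sin:2000} gives $\dim\im\overline{\phi}=\dim S_{1}+\dim S_{2}$, forcing $\ker(\overline{\phi}|_{Y_{1}})=W_{3}\cong S_{3}$. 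Therefore $\im\overline{\phi}\cong Y_{1}/W_{3}$ is uniserial with head $S_{1}$ and socle $S_{2}$. By a dual argument using Theorem~\ref{thm:aux}\eqref{item:auxY3}, $\im\overline{\psi}\cong Y_{3}/U_{1}$ is uniserial with head $S_{3}$ and socle $S_{2}$.

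All three parts then follow. The socles of $\im\overline{\phi}$ and $\im\overline{\psi}$ place copies of $S_{2}$ into $\mathrm{soc}(Y_{2})$. Since $S_{2}$ is self-dual under the outer automorphism of $GL(4,p)$, self-duality of $Y_{2}$ equates the $S_{2}$-multiplicity of $\mathrm{soc}(Y_{2})$ with that of $\mathrm{head}(Y_{2})$; a Frobenius-reciprocity multiplicity count applied to the permutation module $\fl{2}$ shows this common multiplicity equals $1$, so $\mathrm{soc}(Y_{2})\cong S_{2}$ is simple. Then $\im\overline{\eta_{2,2}}$ has dimension $\dim S_{2}$, lies in $\ker\overline{\eta_{2,2}}$, and must coincide with this unique $S_{2}$ socle, proving (1) and (2). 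For (3), both $\im\overline{\phi}$ and $\im\overline{\psi}$ have $\mathrm{soc}(Y_{2})$ as socle and non-isomorphic simple head quotients $S_{1}$ and $S_{3}$, so their intersection equals $\mathrm{soc}(Y_{2})$, and $(\im\overline{\phi}+\im\overline{\psi})/\mathrm{soc}(Y_{2})\cong S_{1}\oplus S_{3}$. The main obstacles are pinning down $\ker(\overline{\phi}|_{Y_{1}})$ precisely from the numerical data of~\cite{sin:2000}, and the multiplicity-one bookkeeping that rules out additional $S_{2}$ summands in $\mathrm{soc}(Y_{2})$.
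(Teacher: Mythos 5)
Your proposal has the right overall skeleton, and parts \eqref{item:img2} and \eqref{item:img3} would go through essentially as you describe (though you lean on the externally cited $p$-ranks of $\eta_{2,2}$ and $\eta_{1,2}$ where the paper argues structurally: it identifies $\im\overline{\eta_{2,2}}$ by decomposing $\overline{\eta_{2,2}}$ in the $3$-dimensional algebra $\mathrm{End}_{\mathbb{F}_{p}G}(\fl{2})$, and it pins down $\ker(\overline{\phi}|_{Y_{1}})=W_{3}$ by observing that $\im\overline{\phi}$ is a quotient of the uniserial $Y_{1}$ whose socle must be the already-identified $\mathrm{soc}(Y_{2})\cong S_{2}$, with no rank computation needed).

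The genuine gap is in part \eqref{item:img1}, which everything else depends on. To prove that $\mathrm{soc}(Y_{2})$ is \emph{simple} you must rule out \emph{every} simple module other than a single copy of $S_{2}$ as a submodule of $Y_{2}$. Your argument only does half of this: the Frobenius reciprocity count $\mathrm{Hom}_{\mathbb{F}_{p}G}(L,\fl{2})\cong\mathrm{Hom}_{\mathbb{F}_{p}P}(L,\mathbb{F}_{p})$ bounds the multiplicity of $S_{2}$ by $\dim(S_{2}^{*})^{P}=1$, but says nothing a priori about whether some other nontrivial simple $L$ with $(L^{*})^{P}\neq 0$ also embeds in $Y_{2}$ --- and you never address this. (You flag only ``ruling out additional $S_{2}$ summands'' as the remaining bookkeeping, which shows the missing case was not on your radar.) The paper closes exactly this gap by invoking the theory of modular representations of finite groups of Lie type (Curtis, Steinberg): a nontrivial simple module whose dual has a nonzero $P$-fixed vector is determined up to isomorphism by that property, so $L^{*}\cong S_{2}$ is forced, and uniqueness of the fixed vector up to scalars gives multiplicity one. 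Separately, the trivial module must be excluded from $\mathrm{soc}(Y_{2})$, which the paper does via $\dim_{\mathbb{F}_{p}}\mathrm{End}_{\mathbb{F}_{p}G}(\fl{2})=3$; your proposal is silent on this case as well. Without these two steps the simplicity of the socle, and hence parts \eqref{item:img2} and \eqref{item:img3}, do not follow.
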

\begin{proof}
Since the stabilizer or a $2$-subspace has $3$ orbits on $\mathcal{L}_{2}$, we have
\begin{equation}
\label{eq:dim3}
\dim_{\mathbb{F}_{p}}\mathrm{End}_{\mathbb{F}_{p}G}(\fl{2}) = 3.
\end{equation}
If $Y_{2}$ contained the trivial module $\mathbb{F}_{p}$ then it would follow from the decomposition~\eqref{eq:decomp} that $\dim_{\mathbb{F}_{p}}\mathrm{End}_{\mathbb{F}_{p}G}(\fl{2}) \geq 4$, a contradiction.  Thus the multiplicity of $\mathbb{F}_{p}$ in $\mathrm{soc}(Y_{2})$ is $0$.

Let $L$ be a simple, non-trivial $\mathbb{F}_{p}G$-module.  By Frobenius reciprocity we have
\begin{align*}
\mathrm{Hom}_{\mathbb{F}_{p}G}(L, Y_{2}) &\cong \mathrm{Hom}_{\mathbb{F}_{p}G}(L, \fl{2}) \\
&\cong \mathrm{Hom}_{\mathbb{F}_{p}P}(L, \mathbb{F}_{p}),
\end{align*}
where $P$ is the stabilizer of a $2$-subspace.  Thus if $L$ is a simple submodule of $Y_{2}$, then by dualizing we see that the multiplicity of $L$ in $\mathrm{soc}(Y_{2})$ is $\dim_{\mathbb{F}_{p}}(L^{*})^{P}$.  Therefore $P$ fixes a nonzero vector of $L^{*}$, and it follows from the general theory of modular representations of finite groups of Lie type~\cite{curtis:1970,steinberg:1963} that $L^{*}$ is determined up to isomorphism by this property.  Since $P$ has a fixed point on $S_{2}$, we thus have $S_{2} \cong L^{*}$.  Furthermore the fixed vector is unique up to scalars, so $\mathrm{soc}(Y_{2})$ is simple.  Since $S_{2}$ and $Y_{2}$ are both self-dual, ~\eqref{item:img1} follows.

Let $\pi_{\mathbf{1}}$ and $\pi_{Y_{2}}$ denote the projections of $\fl{2}$ onto $\mathbb{F}_{p}\mathbf{1}$ and $Y_{2}$, respectively.  By~\eqref{item:img1} we can find an isomorphism $\alpha : \mathrm{head}(Y_{2}) \to \mathrm{soc}(Y_{2})$ of $\mathbb{F}_{p}G$-modules.  Then define $\mu \in \mathrm{End}_{\mathbb{F}_{p}G}(\fl{2})$ by $\mu = \alpha \circ q \circ  \pi_{Y_{2}}$, where $q$ denotes taking the quotient onto $\mathrm{head}(Y_{2})$.  Clearly $\{\pi_{\mathbf{1}}, \pi_{Y_{2}}, \mu\}$ are independent and thus give a basis of $\mathrm{End}_{\mathbb{F}_{p}G}(\fl{2})$ by equation~\eqref{eq:dim3}.

Thus we can write $\overline{\eta_{2,2}} = a\pi_{\mathbf{1}} + b\pi_{Y_{2}} + c\mu$ for some $a,b,c \in \mathbb{F}_{p}$. Since $\im\overline{\eta_{2,2}} \subseteq Y_{2}$ we see that $a=0$.  Also, the restriction of $\overline{\eta_{2,2}}$ to $Y_{2}$ has a nontrivial kernel (it contains, for example, $\overline{\eta_{2,2}}(x-y)$, where $x,y \in \mathcal{L}_{2}$).  Thus $\im\overline{\eta_{2,2}}$ is a proper submodule of $Y_{2}$ and so $b=0$. So $\overline{\eta_{2,2}}$ is a nonzero multiple of $\mu$, hence $\im\overline{\eta_{2,2}} = \im\mu = \mathrm{soc}(Y_{2})$. This proves~\eqref{item:img2}.

Since $\overline{\phi} \neq 0$ and $\im\overline{\phi} \subseteq Y_{2}$, we have by~\eqref{item:img1} that $\mathrm{soc}(Y_{2}) \subseteq \im\overline{\phi}$. The fact that $\overline{\phi}(\mathbf{1}) = 0$ implies that $\im\overline{\phi}$ is isomorphic to a quotient of $Y_{1}$. Then it follows from Theorem~\ref{thm:aux} and \eqref{item:img1} that $\im\overline{\phi} \cong Y_{1}/W_{3}$. Thus $\im\overline{\phi}$ has $\mathrm{soc}(Y_{2})$ as its unique proper submodule and
\[
\im\overline{\phi}/ \mathrm{soc}(Y_{2}) \cong S_{1}.
\]
Similarly, Theorem~\ref{thm:aux} and \eqref{item:img1} imply that $\im\overline{\psi} \cong Y_{3}/U_{1}$. Thus $\im\overline{\psi}$ has $\mathrm{soc}(Y_{2})$ as its unique proper submodule and
\[
\im\overline{\psi}/ \mathrm{soc}(Y_{2}) \cong S_{3}.
\]
Since $\im\overline{\phi} \cap \im\overline{\psi} = \mathrm{soc}(Y_{2})$, ~\eqref{item:img3} follows.
\end{proof}
\section{Proof of Theorem~\ref{thm:main}}
\label{sec:proof}
For $i\ge0$, let $f_{i}$ denote the multiplicity of $p^{i}$ as an elementary divisor of $A$.  Thus $f_{i} = \dim_{\mathbb{F}_{p}}(\overline{M_{i}}/ \overline{M_{i+1}})$.  For $i = 0,1,2,3,4$ let $e_{i}$ denote the claimed multiplicity of $p^{i}$ as listed in the statement of Theorem~\ref{thm:main}.  Then the proof will be completed if we show that $e_{i} = f_{i}$ for $i=0,1,2,3,4$ and $f_{i} = 0$ for $i \ge 5$.

Since $f_{0}$ is precisely the $p$-rank of $A$, we have by Lemma~\ref{lem:img} that
\begin{align*}
f_{0} &= \dim_{\mathbb{F}_{p}}\im\overline{\eta_{2,2}} \\
&= \dim_{\mathbb{F}_{p}}\mathrm{soc}(Y_{2}) \\
&= \dim_{\mathbb{F}_{p}}S_{2} \\
&= \frac{p(2p^{2}+1)}{3} \\
&= e_{0}.
\end{align*}

We proceed by applying Lemma~\ref{lem:img} and our earlier calculations~\eqref{eq:omega1} and~\eqref{eq:omega2} to compute the dimensions of the modules that occur on the left hand side of Lemma~\ref{lem:incl}.  We clearly have
\[
\dim_{\mathbb{F}_{p}}\mathbb{F}_{p}\mathbf{1} = 1 = e_{4}.
\]
Also by Lemma~\ref{lem:img} we compute
\begin{align*}
\dim_{\mathbb{F}_{p}}(\mathbb{F}_{p}\mathbf{1} \oplus \im\overline{\eta_{2,2}}) &= 1+\dim_{\mathbb{F}_{p}}\mathrm{soc}(Y_{2}) \\
&= 1+\dim_{\mathbb{F}_{p}}S_{2} \\
&= 1 + \frac{p(2p^{2}+1)}{3} \\
&= e_{3} + e_{4}
\end{align*}
and
\begin{align*}
\dim_{\mathbb{F}_{p}}(\mathbb{F}_{p}\mathbf{1} \oplus (\im\overline{\phi} + \im\overline{\psi})) &= 1+\dim_{\mathbb{F}_p}\mathrm{soc}(Y_{2}) + \dim_{\mathbb{F}_{p}}((\im\overline{\phi} + \im\overline{\psi})/ \mathrm{soc}(Y_{2})) \\
&= 1+\dim_{\mathbb{F}_{p}}S_{2} + \dim_{\mathbb{F}_{p}}(S_{1} \oplus S_{3}) \\
&= 1 + \frac{p(2p^{2}+1)}{3} + 2\cdot\frac{p(p+1)(p+2)}{6} \\
&= e_{2} + e_{3} + e_{4}.
\end{align*}
A final application of Lemma~\ref{lem:img} and direct computation gives
\begin{align*}
\dim_{\mathbb{F}_{p}}\ker\overline{\eta_{2,2}} &= \dim_{\mathbb{F}_{p}}\fl{2} - \dim_{\mathbb{F}_{p}}\im\overline{\eta_{2,2}} \\
&= \dim_{\mathbb{F}_{p}}\fl{2} - \dim_{\mathbb{F}_{p}}\mathrm{soc}(Y_{2}) \\
&= \dim_{\mathbb{F}_{p}}\fl{2} - \dim_{\mathbb{F}_{p}}S_{2} \\
&= (p^{2}+p+1)(p^{2}+1) - \frac{p(2p^{2}+1)}{3} \\
&= (3p^{4} + p^{3} +6p^{2} + 2p +3)/3 \\
&= e_{1} + e_{2} + e_{3} + e_{4}.
\end{align*}

These computations together with the inclusions stated in Lemma~\ref{lem:incl} give us the inequalities:
\begin{align*}
e_{1} + e_{2} + e_{3} + e_{4} &\leq f_{1} + f_{2} + f_{3} + f_{4} + f_{5} + \cdots , \\
e_{2} + e_{3} + e_{4} &\leq f_{2} + f_{3} + f_{4} + f_{5} + \cdots , \\
e_{3} + e_{4} &\leq f_{3} + f_{4} + f_{5} + \cdots , \\
e_{4} &\leq f_{4} + f_{5} + \cdots .
\end{align*}
Summing the left and right hand sides of this system gives us
\[
e_{1} + 2e_{2} + 3e_{3} + 4e_{4} \leq f_{1} + 2f_{2} + 3f_{3} + 4f_{4} + 5f_{5} + \cdots
\]
and therefore
\[
p^{e_{1} + 2e_{2} + 3e_{3} + 4e_{4}} \leq p^{f_{1} + 2f_{2} + 3f_{3} + 4f_{4} + 5f_{5} + \cdots}.
\]
The right hand side of this inequality is exactly $|\det(A)|$.  By direct calculation and equation~\eqref{eq:det} we find that the left hand side is also $|\det(A)|$. It follows immediately that we have the \textit{equalities}
\begin{align*}
e_{1} + e_{2} + e_{3} + e_{4} &= f_{1} + f_{2} + f_{3} + f_{4} + f_{5} + \cdots , \\
e_{2} + e_{3} + e_{4} &= f_{2} + f_{3} + f_{4} + f_{5} + \cdots , \\
e_{3} + e_{4} &= f_{3} + f_{4} + f_{5} + \cdots , \\
e_{4} &= f_{4} + f_{5} + \cdots ,
\end{align*}
and so
\begin{align*}
e_{1} &= f_{1} , \\
e_{2} &= f_{2} , \\
e_{3} &= f_{3}.
\end{align*}
Finally, since $e_{4} = 1$ it follows easily that $f_{4} = 1$ and $f_{i} = 0$ for $i \ge 5$.
\qed
\bibliographystyle{plain}
\bibliography{reference}
\end{document}